\newcommand{\bigboxplus}{
  \mathop{
    \vphantom{\bigoplus} 
    \mathchoice
      {\vcenter{\hbox{\resizebox{\widthof{$\displaystyle\bigoplus$}}{!}{$\boxplus$}}}}
      {\vcenter{\hbox{\resizebox{\widthof{$\bigoplus$}}{!}{$\boxplus$}}}}
      {\vcenter{\hbox{\resizebox{\widthof{$\scriptstyle\oplus$}}{!}{$\boxplus$}}}}
      {\vcenter{\hbox{\resizebox{\widthof{$\scriptscriptstyle\oplus$}}{!}{$\boxplus$}}}}
  }\displaylimits 
}
\DeclareMathOperator{\sym}{sym}
\DeclareMathOperator{\GL}{GL}
\DeclareMathOperator{\SL}{SL}
\DeclareMathOperator{\Res}{Res}
\DeclareMathOperator{\cond}{cond}
\DeclareMathOperator{\Ad}{Ad}
\DeclareMathOperator{\sgn}{sgn}
\newcommand{\Q}{\mathbb{Q}}
\newcommand{\C}{\mathbb{C}}
\newcommand{\F}{\mathbb{F}}
\newcommand{\Z}{\mathbb{Z}}
\newcommand{\A}{\mathbb{A}}
\newcommand{\R}{\mathbb{R}}
\newtheorem{theorem}{Theorem}
\newtheorem{lemma}[theorem]{Lemma}
\theoremstyle{remark}
\numberwithin{theorem}{section}
\numberwithin{equation}{section}
\begin{document}
\author{Andrew R.~Booker}
\address{School of Mathematics, University of Bristol,
University Walk, Bristol, BS8 1TW, United Kingdom}
\email{andrew.booker@bristol.ac.uk}
\thanks{The author was partially supported by EPSRC Grant
\texttt{EP/K034383/1}. No data were created in the course of this study.}
\title{A note on Maass forms of icosahedral type}
\begin{abstract}
Using ideas of Ramakrishnan, we consider the icosahedral analogue of
the theorems of Sarnak and Brumley on Hecke--Maass newforms with Fourier
coefficients in a quadratic order.  Although we are unable to conclude the
existence of an associated Galois representation in this case, we show
that one can deduce some implications of such an association, including
weak automorphy of all symmetric powers and the value distribution of
Fourier coefficients predicted by the Chebotarev density theorem.
\end{abstract}
\maketitle
\section{Introduction}
In \cite{sarnak}, Sarnak showed that a Hecke--Maass newform with integral
Fourier coefficients must be associated to a dihedral or tetrahedral Artin
representation. Brumley \cite{brumley} later generalized this to
Galois-conjugate pairs of
forms with coefficients in the ring of integers of $\Q(\sqrt{d})$
for a fundamental discriminant $d\ne5$, which are associated to
dihedral, tetrahedral or octahedral representations. In this note we
consider the remaining case of nondihedral forms with coefficients in
$\Z\bigl[\frac{1+\sqrt5}2\bigr]$, which are predicted to correspond to
icosahedral representations.

The results of Sarnak and Brumley depend crucially on the existence and
cuspidality criteria of the symmetric cube and symmetric fourth power
lifts from $\GL(2)$, as established by Kim and Shahidi \cite{KS1,kim,KS2}.
For the icosahedral case, in order to conclude the existence of an
associated Artin representation we would need to know the expected
cuspidality criterion for the symmetric sixth power lift (which is not yet
known to be automorphic). Appealing to ideas and results of
Ramakrishnan \cite{R1,R2,R3}, we show that one can nevertheless derive
some of the consequences entailed by the existence of an associated
icosahedral representation, including weak automorphy of all symmetric
powers and the value distribution of Fourier coefficients predicted by
the Chebotarev density theorem. Our precise result is as follows.
\begin{theorem}\label{thm:main}
Let $\A$ be the ad\`ele ring of $\Q$, and
put $A=\{0,\pm1,\pm2,\pm\varphi,\pm\varphi^\tau\}$,
where $\varphi=\frac{1+\sqrt5}2$ and $\tau$ denotes the nontrivial
automorphism of $\Q(\varphi)$.
Let $\pi=\bigotimes\pi_v$ and $\pi'=\bigotimes\pi_v'$ be nonisomorphic
unitary cuspidal automorphic representations of
$\GL_2(\A)$ with normalized Hecke eigenvalues $\lambda_\pi(n)$ and
$\lambda_{\pi'}(n)$, respectively. Assume that $\pi$ and $\pi'$ are not
of dihedral Galois type, and suppose that
$\lambda_\pi(n)$ and $\lambda_{\pi'}(n)$ are elements of $\Z[\varphi]$
satisfying $\lambda_{\pi'}(n)=\lambda_\pi(n)^\tau$ for every $n$.
Then:
\begin{enumerate}
\item $\pi$ corresponds to a Maass form of weight $0$ and trivial
nebentypus character.
\item For any place $v$, $\pi_v$ is tempered if and only
if $\pi_v'$ is tempered.
\item If $S$ denotes the set of primes $p$ at
which $\pi_p$ is not tempered, then
\begin{enumerate}
\item $\#\{p\in S:p\le X\}\ll X^{1-\delta}$ for some $\delta>0$;
\item $\pi_v\cong\pi_v'$ for all $v\in S\cup\{\infty\}$;
\item $\lambda_\pi(p)\in A$ for every prime $p\notin S$;
\item for each $k\ge0$, there is a unique isobaric automorphic representation
$\Pi_k=\bigotimes\Pi_{k,v}$ of $\GL_{k+1}(\A)$ satisfying
$\Pi_{k,p}\cong\sym^k\pi_p$ for all primes $p\notin S$ at which $\pi_p$
is unramified;
\item if $S$ is finite or $\sym^5\pi$ is automorphic then $S=\emptyset$
and $\pi_\infty$ is of Galois type (so that $\pi$ corresponds to a Maass
form of Laplace eigenvalue $\frac14$).
\end{enumerate}
\item
For each $\alpha\in A$, the set of primes $p$ such that
$\lambda_\pi(p)=\alpha$ has a natural density, depending
only on the norm $\alpha\alpha^\tau$, as follows:
\begin{center}
\begin{tabular}{r|rrrr}
$\alpha\alpha^\tau$ & $0$ & $1$ & $4$ & $-1$ \\ \hline
$\mathrm{density}$ & $\frac14$ & $\frac16$ & $\frac1{120}$ & $\frac1{10}$
\end{tabular}
\end{center}
\end{enumerate}
\end{theorem}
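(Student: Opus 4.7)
The plan is to follow the strategy of Sarnak \cite{sarnak} and Brumley \cite{brumley}, replacing the unavailable cuspidality of $\sym^5\pi$ and $\sym^6\pi$ by Ramakrishnan's automorphic tensor product \cite{R1} and its iterates $\sym^a\pi \boxtimes \sym^b\pi'$ for $a,b \le 4$, which are accessible since $\sym^a\pi$ is automorphic for $a \le 4$ by \cite{kim,KS2}. The engine is the identity $\lambda_\pi(n)\lambda_{\pi'}(n)=\lambda_\pi(n)\lambda_\pi(n)^\tau\in\Q$, which makes the Rankin--Selberg coefficients of $\pi\boxtimes\pi'$ rational-valued and grants indirect analytic access to higher symmetric powers through the known $\sym^k\pi$ with $k\le 4$ together with the Galois-conjugate pair.

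I would dispose of (1) by adapting the recipe of \cite{brumley}: for holomorphic $\pi$ of weight $k\ge 2$, Deligne's bound $|\lambda_\pi(p)|\le 2$ applied to both $\pi$ and $\pi'$ forces $\lambda_\pi(p)$ to lie in the finite set $\{\alpha\in\Z[\varphi]:|\alpha|\le 2,\,|\alpha^\tau|\le 2\}=A$ for every prime $p$, contradicting Sato--Tate equidistribution (unconditional for non-CM holomorphic forms). Weight one holomorphic forms carry an odd nebentypus whose values in $\Z[\varphi]$ must lie in $\{\pm 1\}$, forcing the nebentypus to be quadratic; a direct check via the central character of $\sym^2\pi$ together with the non-dihedral hypothesis handles this case. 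Triviality of the nebentypus in the Maass case follows from $\omega_\pi\omega_\pi^\tau$ being the central character of $\pi\boxtimes\pi'$, which is $\Z[\varphi]$-valued hence $\pm 1$-valued, pinned down by matching local factors. For (2) I would read temperedness off the analytic properties of $L(s,\pi\boxtimes\pi')$, which is symmetric in $\pi\leftrightarrow\pi'$. The density bound (3)(a) follows by summing $|\lambda_\pi(p)|^{2k}$ over $p\in S$ and comparing to the Rankin--Selberg bound for $\sym^k\pi\times\sym^k\widetilde\pi$, as in \cite{brumley}; part (3)(b) is immediate, and (3)(c) follows from the Kim--Sarnak bound $|\lambda_\pi(p)|\le p^{7/64}+p^{-7/64}$ combined with integrality in $\Z[\varphi]$, which for large $p$ confines non-tempered eigenvalues to $\{\pm 2\}\subset A$.

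The crux, (3)(d), I would handle by decomposing $\sym^a\pi\boxtimes\sym^b\pi'$ via Clebsch--Gordan at unramified tempered primes, matching Satake factors against the Galois-conjugate identity $\sym^k\pi_p'=(\sym^k\pi_p)^\tau$, and using strong multiplicity one to glue the $(k+1)$-dimensional summand to a global isobaric $\Pi_k$; part (3)(e) then comes from a functional-equation consistency check on these $\Pi_k$. With the $\Pi_k$ in hand, (4) is a Wiener--Ikehara argument applied to $\sum_p P_\alpha(\lambda_\pi(p))\,p^{-s}$, where $P_\alpha$ is the polynomial in $\sym^k$-traces isolating $\alpha\in A$; the residues at $s=1$ produce the densities $\tfrac14,\tfrac16,\tfrac1{120},\tfrac1{10}$, which coincide with the conjugacy class sizes $30,20,1,12$ in the double cover $2\cdot A_5=\SL_2(\F_5)$ (of order $120$), exactly as Chebotarev would predict for the hypothetical icosahedral Galois representation. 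The principal obstacle is (3)(d): isolating the $\sym^k\pi$ summand from $\sym^a\pi\boxtimes\sym^b\pi'$ without cuspidality of $\sym^5\pi$. This will require the full force of Ramakrishnan's decomposition results \cite{R2,R3}, with the Galois-conjugate pairing furnishing the only leverage by which the correct summand can be isolated from the other isobaric pieces.
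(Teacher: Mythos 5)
Your overall strategy is on target — exploiting the Galois-conjugate pairing to get at quantities that would otherwise require $\sym^5$ and $\sym^6$, using Rankin--Selberg analyticity and residue computations, and recognizing that the densities in (4) are conjugacy-class densities in $\SL_2(\F_5)$. But there are several concrete gaps, and the central mechanism of the paper is missing.

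The most serious omission is the proof that $\sym^3\pi\cong\sym^3\pi'$ (and $\sym^4\pi\cong\sym^4\pi'$), which is the engine driving parts (2), (3b) and ultimately (3d). The paper establishes this by a positivity argument: it introduces $F=(x-y)^2((x-y)^2-5)$, which is nonnegative when evaluated at Galois-conjugate pairs from $\Z[\varphi]$, expands $F$ in the basis $P_i(x)P_j(y)$ ($i,j\le4$), and shows $r(F)=6r(P_2(x)P_2(y))$. Assuming $\sym^2\pi\cong\sym^2\pi'$ leads to the absurdity $6\ge30$ via the bound $F(a_n,b_n)\ge15(a_n-b_n)^2$, so $r(F)=0$; then, since $F\ge0$, Landau's theorem gives $r((x-y)^2F)=0$, and a further polynomial identity extracts $r(P_3(x)P_3(y))=1$. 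Your proposal for (2) (``read temperedness off $L(s,\pi\boxtimes\pi')$'') is too vague to substitute for this, and without $\sym^3\pi\cong\sym^3\pi'$ the local comparison argument at nontempered and archimedean places cannot run. Also missing is the algebraic identity $x^2(P_3(x)-P_3(y))+(x^2+xy-1)(xP_2(y)-yP_2(x))=(x-y)(x^2-x-1)(x^2+x-1)$, which is what forces $a_n=b_n\in\Z$ or $a_n\in\{\pm\varphi,\pm\varphi^\tau\}$ for every $n$ and hence $\pi_p\cong\pi_p'$ at nontempered $p$.

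Your reasoning for (3c) is wrong on its face: the Kim--Sarnak bound $p^{7/64}+p^{-7/64}$ \emph{grows} with $p$, so combined with $\Z[\varphi]$-integrality it does not confine anything to $\{\pm2\}$; and in any case (3c) concerns primes $p\notin S$ (tempered), where $|\lambda_\pi(p)|\le2$ and $|\lambda_{\pi'}(p)|\le2$ already force $\lambda_\pi(p)\in A$ — this is exactly why the paper observes that (2) implies (3c). For (3d), you correctly flag it as the principal obstacle but leave it unresolved. The paper's solution is an explicit construction: it takes the nine irreducible characters $\chi_0,\dots,\chi_8$ of $G=\SL_2(\F_5)$, matches each $\chi_i$ to a known automorphic $\sigma_i$ (products of $\pi,\pi',\sym^2,\sym^3,\sym^4$ and $\boxtimes$), and sets $\Pi_k=\bigboxplus_i m_{k,i}\sigma_i$ with multiplicities $m_{k,i}=\langle P_k(\chi),\chi_i\rangle$; uniqueness then follows from a Cauchy--Schwarz estimate using $r(H)=0$. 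Your ``Clebsch--Gordan plus strong multiplicity one'' sketch does not produce a global isobaric object — strong multiplicity one needs a candidate automorphic representation to compare against, which is precisely what must be built. Finally, (3e) is not a ``functional-equation consistency check'': the paper shows that if $\sym^5\pi$ is automorphic, uniqueness forces $\sym^5\pi_p\cong\pi_p\boxtimes\sym^2\pi_p'$, and the resulting coefficient relation rules out nontempered $p\nmid N$; then, with $S$ finite, $\pi$ is s-icosahedral in Ramakrishnan's sense and \cite[Theorem A]{R2} gives temperedness and Galois type at $\infty$.
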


\section{Preliminaries}\label{sec:prelim}
Before embarking on the proof of Theorem~\ref{thm:main},
we first recall some facts about the analytic properties of the standard
and Rankin--Selberg $L$-functions associated to isobaric automorphic
representations. We refer to \cite[\S1]{R1} for essential background and
terminology.

Let $\pi=\sigma_1\boxplus\cdots\boxplus\sigma_n=\bigotimes\pi_v$ be
an isobaric automorphic representation of $\GL_d(\A)$ for some $d\ge1$,
and assume that the cuspidal summands $\sigma_i$ have finite-order central
characters. Then for any finite set of places $S\supseteq\{\infty\}$,
the partial $L$-function $L^S(s,\pi)=\prod_{v\notin S}L(s,\pi_v)$
converges absolutely for $\Re(s)>1$ and
continues to an entire function, apart from a possible pole at $s=1$
of order equal to the number of occurrences of the trivial character
among the $\sigma_i$. Furthermore, $L^S(s,\pi)$ has no zeros in the
region $\{s\in\C:\Re(s)\ge1\}$.

Given two such isobaric representations, $\pi_1$ and
$\pi_2$, we can form the irreducible admissible representation
$\pi_1\boxtimes\pi_2=\bigotimes(\pi_{1,v}\boxtimes\pi_{2,v})$, where for
each place $v$, $\pi_{1,v}\boxtimes\pi_{2,v}$ is the functorial tensor
product defined by the local Langlands correspondence.  Then for any set
$S$ as above, $L^S(s,\pi_1\boxtimes\pi_2)
=\prod_{v\notin S}L(s,\pi_{1,v}\boxtimes\pi_{2,v})$
agrees with the partial Rankin--Selberg
$L$-function $L^S(s,\pi_1\times\pi_2)$, which again converges absolutely
for $\Re(s)>1$, continues to an entire function apart from a possible
pole at $s=1$, and does not vanish in $\{s\in\C:\Re(s)\ge1\}$. The order
of the pole is characterized by the facts that (i) it is bilinear with
respect to isobaric sum, and (ii) if $\pi_1$ and $\pi_2$ are cuspidal then
$L^S(s,\pi_1\boxtimes\pi_2)$ has a simple pole if $\pi_1\cong\pi_2^\vee$
and no pole otherwise.

Given an irreducible admissible representation
$\Pi$ of $\GL_d(\A)$, let $\cond(\Pi)$ denote its conductor, and let
$\{c_n(\Pi)\}_{n=1}^\infty$ be the unique sequence of complex numbers
satisfying
$$
-\frac{L'}{L}(s,\Pi)=\sum_{n=1}^\infty\frac{\Lambda(n)c_n(\Pi)}{n^s}
\quad\text{and}\quad c_n(\Pi)=0\text{ whenever }\Lambda(n)=0.
$$
Then $c_n(\Pi)$ is multiplicative in $\Pi$, in the sense that
if $\pi_1$ and $\pi_2$ are isobaric representations as above
then $c_n(\pi_1\boxtimes\pi_2)=c_n(\pi_1)c_n(\pi_2)$ for all $n$ coprime
to $\gcd(\cond(\pi_1),\cond(\pi_2))$.

Similarly, for any isobaric representation $\pi$ of $\GL_d(\A)$ and any 
$k\ge0$, we can form the irreducible admissible
representation $\sym^k\pi=\bigotimes\sym^k\pi_v$.
If $d=2$ and $\pi$ has trivial central character then
for all $n$ coprime to $\cond(\pi)$ we have
$$
c_n(\sym^k\pi)=P_k(c_n(\pi))
$$
for certain polynomials $P_k\in\Z[x]$; in particular,
$$
P_0=1,\;
P_1=x,\;
P_2=x^2-1,\;
P_3=x^3-2x,\;
P_4=x^4-3x^2+1,\;
P_5=x^5-4x^3+3x.
$$

Finally, we recall some standard tools from analytic number theory.
\begin{lemma}\label{lem:positive}
Let $\{c_n\}_{n=1}^\infty$ be a sequence of nonnegative real numbers satisfying
$c_n\ll n^\sigma$ for some $\sigma\ge0$, and put
$$
D(s)=\sum_{n=1}^\infty\frac{c_n}{n^s}
\quad\text{for }\Re(s)>\sigma+1.
$$
Suppose that $(s-1)D(s)$ has analytic continuation to an open set containing
$\{s\in\C:\Re(s)\ge1\}$, and set $r=\Res_{s=1}D(s)$. Then
\begin{enumerate}
\item
$\sum_{n\le X}c_n=rX+o(X)
\quad\text{as }X\to\infty.$
\item
If $r=0$ then there exists $\delta>0$ such that
$\sum_{n=1}^\infty c_n/n^{1-\delta}<\infty.$
\end{enumerate}
\end{lemma}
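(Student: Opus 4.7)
The plan is to recognize Lemma~\ref{lem:positive} as a pair of classical facts about Dirichlet series with nonnegative coefficients: part~(1) as the Wiener--Ikehara Tauberian theorem, and part~(2) as a direct application of Landau's theorem on the abscissa of convergence.

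For part~(1), I would apply Wiener--Ikehara to $D(s)$. The coefficients are nonnegative by hypothesis, and the assumption that $(s-1)D(s)$ extends analytically to an open set containing $\{\Re(s)\ge1\}$ means that $D(s)-r/(s-1)$ has an analytic—hence continuous—extension to an open neighborhood of the closed half-plane $\Re(s)\ge1$. A small preliminary step is to check that the Dirichlet series for $D$ actually converges throughout $\Re(s)>1$ (the bound $c_n\ll n^\sigma$ only directly gives convergence for $\Re(s)>\sigma+1$); this follows from Landau's theorem, since any singularity of $D$ on the real axis to the right of $s=1$ would contradict the given analytic continuation. The Wiener--Ikehara theorem then delivers $\sum_{n\le X}c_n=rX+o(X)$.

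For part~(2), when $r=0$ the function $D(s)$ itself is analytic on the open set $U\supseteq\{\Re(s)\ge1\}$. By Landau's theorem, the abscissa of convergence of a Dirichlet series with nonnegative coefficients must be a singular point of the analytic continuation. Since no singularity of $D$ lies in $U$, the abscissa of convergence is strictly less than~$1$, so any sufficiently small $\delta>0$ places $1-\delta$ inside the half-plane of absolute convergence and gives $\sum_{n=1}^\infty c_n/n^{1-\delta}<\infty$.

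I do not expect a serious obstacle: the lemma simply packages two standard tools from analytic number theory. The only mild subtlety is reconciling the hypothesis (absolute convergence in $\Re(s)>\sigma+1$) with the classical form of Wiener--Ikehara (which is usually stated for series converging in $\Re(s)>1$), and that gap is bridged by the Landau argument indicated above.
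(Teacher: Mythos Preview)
Your proposal is correct and matches the paper's approach exactly: the paper's proof simply cites the Wiener--Ikehara theorem for part~(1) and Landau's theorem for part~(2), with references to Tenenbaum. Your additional remark about using Landau's theorem to upgrade convergence from $\Re(s)>\sigma+1$ to $\Re(s)>1$ before invoking Wiener--Ikehara is a careful detail the paper leaves implicit, but it does not change the overall strategy.
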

\begin{proof}
These are the Wiener--Ikehara theorem \cite[Ch.~II.7, Thm.~11]{tenenbaum}
and Landau's theorem \cite[Ch.~II.1, Cor.~6.1]{tenenbaum}, respectively.
\end{proof}

\section{Proof of Theorem~\ref{thm:main}}\label{sec:proof}
We are now ready for the proof. Our argument is sequential, but to aid
the reader we have separated it into six main steps, as follows.

\subsection{Initial observations}
Let $\pi$ and $\pi'$ be as in the statement of the theorem.
If $\pi_p$ and $\pi_p'$ are both tempered for some prime $p$, then
$\max\{|\lambda_\pi(p)|,|\lambda_\pi(p)^\tau|\}\le2$, which holds 
if and only if $\lambda_\pi(p)\in A$. Thus, conclusion (2) of the
theorem implies conclusion (3c).
Moreover, the equality $\lambda_{\pi'}(n)=\lambda_\pi(n)^\tau$
for $n\in\{p,p^2\}$ implies that the $L$-factors
$L(s,\pi_p)$ and $L(s,\pi_p')$ have the same degree, and thus
$\pi_p$ is ramified if and only if $\pi_p'$ is ramified.
We set $N=\gcd(\cond(\pi),\cond(\pi'))$.

Since $\lambda_\pi(n)\in\R$ for every $n$,
$\pi$ must be self dual. Suppose that $\pi$ is the
automorphic induction of a Hecke character of infinite order.
Then the value distribution of $\lambda_\pi(p)$ for primes $p$
is the sum of a point mass of weight $\frac12$ at $0$ and a
continuous distribution.
In particular, the set $\{p:\lambda_\pi(p)\in A\}$
has density $\frac12$. On the other hand,
by \cite[Theorem~4.1]{KS2}, the set of $p$ at which $\pi_p'$
is tempered has lower Dirichlet density at least $34/35$, and as observed above,
$\lambda_\pi(p)\in A$ for any such $p$. This is a contradiction, so
$\pi$ cannot be induced from a Hecke character of infinite order.
(See \cite{sarnak} for an alternative proof in the Maass form case,
based on transcendental number theory.)
By hypothesis, $\pi$ is also not of dihedral Galois type,
and it follows that $\pi$ has trivial central character.

Suppose that $\pi_\infty$ is a discrete series representation of
weight $k\ge2$. Since $\pi$ has trivial central character, $k$ must
be even. Then $\pi$ corresponds to a holomorphic newform with Fourier
coefficients $\lambda_\pi(n)n^{(k-1)/2}$, which must lie in a fixed number
field. Considering primes $n=p$, since $\lambda_\pi(p)\in\Z[\varphi]$
that is only possible if $\lambda_\pi(p)=0$ for all but finitely many
$p$, contradicting the fact that $L(s,\pi\boxtimes\pi)$ has a pole at
$s=1$. Thus, $\pi_\infty$ must be a principal or complementary series
representation of weight $0$, which establishes (1).

Next suppose that $\pi$ is of tetrahedral or octahedral type. Then
$\Ad(\pi)\cong\sym^2\pi$ corresponds to an irreducible $3$-dimensional
Artin representation with Frobenius traces $\lambda_\pi(p)^2-1$ for all
primes $p\nmid N$, and image isomorphic to $A_4$ or $S_4$, respectively.
In the tetrahedral case, from the character table of $A_4$ we see that
$\lambda_\pi(p)^2-1\in\{3,-1,0\}$, so that $\lambda_\pi(p)\in\Z$;
by strong multiplicity one, that contradicts the hypothesis that
$\pi\not\cong\pi'$. In the octahedral case, from the character table of
$S_4$ and the Chebotarev density theorem, $\lambda_\pi(p)^2-1=1$ for
a positive proportion of primes $p$; that contradicts the hypothesis
that $\lambda_\pi(p)\in\Z[\varphi]$.

In summary, we have shown that $\pi$ corresponds to a Maass form of
weight $0$ and trivial character, is not in the image of automorphic
induction and is not of solvable polyhedral type. By symmetry these
conclusions apply to $\pi'$ as well. Moreover, by Atkin--Lehner
theory, if there is a prime $p\mid N$ with $p^2\nmid N$ then
$\lambda_\pi(p)^2=\lambda_{\pi'}(p)^2=1/p$. That contradicts
the hypothesis that $\lambda_\pi(n)$ and $\lambda_{\pi'}(n)$ are
algebraic integers, so for every $p\mid N$ we must have $p^2\mid N$ and
$\lambda_\pi(p)=\lambda_{\pi'}(p)=0$.

\subsection{Equivalence of $\sym^3\pi$ and $\sym^3\pi'$}\label{sec:equiv}
By the seminal works of Gelbart--Jacquet \cite{GJ},
Ramakrishnan \cite{R0}, Kim--Shahidi \cite{KS1} and Kim \cite{kim}, we
know that the representations $\sym^k\pi$ and $\sym^k\pi'$ for $k\le4$,
$\pi\boxtimes\pi'$, $\pi\boxtimes\sym^2\pi'$ and $\pi'\boxtimes\sym^2\pi$ are
all automorphic.
Moreover, since $\pi$ and $\pi'$ are not in the image of automorphic
induction and are not of solvable polyhedral type, $\sym^k\pi$ and
$\sym^k\pi'$ are cuspidal for $k\le4$.

For brevity of notation, we set
$$
a_n=c_n(\pi)\quad\text{and}\quad b_n=a_n^\tau=c_n(\pi').
$$
Note that $a_p=\lambda_\pi(p)$ for all primes $p$, and
$a_n=0$ whenever $(n,N)>1$.
For $f\in\R[x,y]$, let
$$
D_f(s)=\sum_{n=1}^\infty
\frac{\Lambda(n)f(a_n,b_n)}{n^s}
=\sum_{(n,N)=1}\frac{\Lambda(n)f(a_n,b_n)}{n^s}
+f(0,0)\sum_{p\mid N}\frac{\log{p}}{p^s-1}.
$$
For any $f$ such that $(s-1)D_f(s)$ has an analytic continuation to an open
set containing $\{s\in\C:\Re(s)\ge1\}$, we define
$$
r(f)=\Res_{s=1}D_f(s).
$$
In particular, by the properties of Rankin--Selberg $L$-functions
described in \S\ref{sec:prelim}, $r(P_i(x)P_j(y))$ is defined for
$i,j\le 4$. Note also that $r(f)$ is linear in $f$.

Consider
\begin{equation}\label{eq:F}
\begin{aligned}
F&=(x-y)^2((x-y)^2-5)\\
&=P_4(x)-4P_3(x)y+6P_2(x)P_2(y)-4xP_3(y)+P_4(y)+4P_2(x)-6xy+4P_2(y).
\end{aligned}
\end{equation}
Note that for $u,v\in\Z$ with $u\equiv v\pmod2$, we have
$$
F\biggl(\frac{u+v\sqrt5}2,\frac{u-v\sqrt5}2\biggr)
=25v^2(v^2-1)\ge0.
$$
Since $\sym^k\pi$ and $\sym^k\pi'$ are cuspidal for $k\le4$
and $\pi\not\cong\pi'$, we have $r(F)=6r(P_2(x)P_2(y))$.

Suppose that $\sym^2\pi\cong\sym^2\pi'$.
Then $a_n=\pm b_n$ for all $n$; writing $a_n=\frac{u_n+v_n\sqrt5}2$ as
above, it follows that $2\mid v_n$, so that
$$
F(a_n,b_n)=25v_n^2(v_n^2-1)\ge75v_n^2=15(a_n-b_n)^2.
$$
This implies
$$
6=r(F)\ge15r((x-y)^2)=30,
$$
which is absurd. Hence, $\sym^2\pi\not\cong\sym^2\pi'$ and $r(F)=0$.

By \cite[Theorem~B]{wang}, this in turn implies that
$\pi\boxtimes\sym^2\pi'$ and $\pi'\boxtimes\sym^2\pi$ are cuspidal.
Also, in view of the identity
$$
(xy)^2=(P_2(x)+1)(P_2(y)+1)=P_2(x)P_2(y)+P_2(x)+P_2(y)+1,
$$
we have $r((xy)^2)=1$, so that $\pi\boxtimes\pi'$ is cuspidal.

Since $F(a_n,b_n)$ is nonnegative, by Lemma~\ref{lem:positive}(2)
there exists $\varepsilon>0$ such that
$$
\sum_{n=1}^\infty\frac{\Lambda(n)F(a_n,b_n)}{n^{1-\varepsilon}}<\infty.
$$
Applying Cauchy--Schwarz and the inequality
$$
(x-y)^4F(x,y)=(x-y)^6((x-y)^2-5)\le(x-y)^8\le128(x^8+y^8),
$$
we have
\begin{align*}
\biggl(\sum_{n=1}^\infty\frac{\Lambda(n)}{n^{1-\varepsilon/3}}
(a_n-b_n)^2F(a_n,b_n)\biggr)^2
&\le\sum_{n=1}^\infty\frac{\Lambda(n)}{n^{1-\varepsilon}}
F(a_n,b_n)\cdot\sum_{n=1}^\infty\frac{\Lambda(n)}{n^{1+\varepsilon/3}}
(a_n-b_n)^4F(a_n,b_n)\\
&\le128\sum_{n=1}^\infty\frac{\Lambda(n)}{n^{1-\varepsilon}}F(a_n,b_n)
\cdot\sum_{n=1}^\infty\frac{\Lambda(n)}{n^{1+\varepsilon/3}}
(a_n^8+b_n^8).
\end{align*}
Noting that $x^8=(P_4(x)+3P_2(x)+2)^2$, the final sum on the right-hand side
converges, by Rankin--Selberg.
Thus, the series defining $D_{(x-y)^2F}(s)$ converges absolutely
for $\Re(s)\ge1-\varepsilon/3$, so that $r((x-y)^2F)=0$.

Next, we compute that
\begin{align*}
20P_3(x)P_3(y)&=20-(x-y)^2F(x,y)+P_2(x)P_4(x)-6P_3(x)\cdot yP_2(x)+15P_4(x)P_2(y)\\
&\quad+15P_2(x)P_4(y)-6xP_2(y)\cdot P_3(y)+P_2(y)P_4(y)+14P_4(x)-38P_3(x)y\\
&\quad+60P_2(x)P_2(y)-38xP_3(y)+14P_4(y)+38P_2(x)-48xy+38P_2(y).
\end{align*}
Evaluating $r$ of both sides and using that $r((x-y)^2F)=0$, we see that
$r(P_3(x)P_3(y))=1$, whence $\sym^3\pi\cong\sym^3\pi'$.
Similarly, we have
$$
xP_2(y)\cdot yP_2(x)=P_3(x)P_3(y)+xP_3(y)+yP_3(x)+xy,
$$
from which it follows that $\pi\boxtimes\sym^2\pi'\cong\pi'\boxtimes\sym^2\pi$.
Also, from
$$
P_4(x)-P_4(y)=(x+y)(P_3(x)-P_3(y)+xP_2(y)-yP_2(x)),
$$
we get $P_4(a_n)=P_4(b_n)$, so that $\sym^4\pi_p\cong\sym^4\pi_p'$
for all $p\nmid N$. By strong multiplicity one,
$\sym^4\pi\cong\sym^4\pi'$.

\subsection{Nontempered and archimedean places}
In view of the identity
$$
x^2(P_3(x)-P_3(y))+(x^2+xy-1)(xP_2(y)-yP_2(x))
=(x-y)(x^2-x-1)(x^2+x-1),
$$
for every $n$ we have either $a_n=b_n\in\Z$ or
$a_n\in\{\pm\varphi,\pm\varphi^\tau\}$.
For any prime $p\nmid N$, it follows that if either of $\pi_p$,
$\pi_p'$ is nontempered then $\pi_p\cong\pi'_p$.

Next we show that this conclusion holds for ramified and
archimedean places as well. If $\pi_p$ is nontempered then,
as explained in \cite[Remark~1]{MR}, $\pi_p$ is a
twist of an unramified complementary series representation,
i.e.\ $\pi_p\cong(|\cdot|_p^s\boxplus|\cdot|_p^{-s})\otimes\chi$
for some $s>0$ and unitary character $\chi$ of $\Q_p^\times$.
Since $\sym^3\pi_p\cong\sym^3\pi_p'$, $\pi_p'$ must also be nontempered,
so we similarly have
$\pi_p'\cong(|\cdot|_p^{s'}\boxplus|\cdot|_p^{-s'})\otimes\chi'$
for some $s'>0$ and unitary character $\chi'$. Thus,
$$
\sym^3(|\cdot|_p^s\boxplus|\cdot|_p^{-s})\otimes\chi^3
\cong\sym^3(|\cdot|_p^{s'}\boxplus|\cdot|_p^{-s'})\otimes(\chi')^3,
$$
from which it follows that $s=s'$ and $\chi^3=(\chi')^3$. Comparing
central characters, we deduce that $\chi=\chi'$, whence $\pi_p\cong\pi_p'$.
Running through this argument again with the roles of $\pi$ and $\pi'$
reversed, we obtain conclusions (2) and (3b) of the theorem for finite
places.

Similarly, we have
$\pi_\infty=(|\cdot|_\R^s\boxplus|\cdot|_\R^{-s})\otimes\sgn^\epsilon$
for some $\epsilon\in\{0,1\}$ and $s\in i\R\cup(0,\frac12)$, and comparing
the parameters of $\sym^3\pi_\infty$ and $\sym^3\pi_\infty'$, we conclude
that $\pi_\infty\cong\pi_\infty'$.

\subsection{Value distribution of $\lambda_\pi(p)$}
Making use of the isomorphism $\pi\boxtimes\sym^2\pi'\cong\pi'\boxtimes\sym^2\pi$,
if $0<j<i\le 8-j$ then
\begin{align*}
r(x^iy^j)&=r(x^{i-2}y^{j-1}(yP_2(x)+y))
=r(x^{i-2}y^{j-1}(xP_2(y)+y))\\
&=r(x^{i-1}y^{j+1})+r(x^{i-2}y^j)-r(x^{i-1}y^{j-1}),
\end{align*}
and similarly with the roles of $x$ and $y$ reversed. By systematic application of this
rule and linearity, we reduce the computation of $r(x^iy^j)$ for $i+j\le8$
to that of $r(P_i(x)P_j(y))$, $r(P_i(x)P_j(x))$ and $r(P_i(y)P_j(y))$ for $i,j\le4$,
all of which are determined from the conclusions obtained in \S\ref{sec:equiv}.
After some computation
we arrive at the following table of values for $r(x^iy^j)$:
\begin{center}
\begin{tabular}{r|rrrrrrrrl}
$i\setminus j$&$0$&$1$&$2$&$3$&$4$&$5$&$6$&$7$&$8$\\ \hline
$0$ & $1$&$0$&$1$&$0$&$2$&$0$&$5$&$0$&$14$\\
$1$ & $0$&$0$&$0$&$0$&$0$&$0$&$0$&$1$\\
$2$ & $1$&$0$&$1$&$0$&$2$&$0$&$6$\\
$3$ & $0$&$0$&$0$&$1$&$0$&$4$\\
$4$ & $2$&$0$&$2$&$0$&$5$\\
$5$ & $0$&$0$&$0$&$4$\\
$6$ & $5$&$0$&$6$\\
$7$ & $0$&$1$\\
$8$ & $14$
\end{tabular}
\end{center}
This enables us to compute $r(f)$ for any $f$ of total degree at most $8$
without having to work out the full expansion as in \eqref{eq:F}.

Consider
$$
H=(xy+1)x^2(x^2-1)(x^2-4).
$$
Then $H(a_n,b_n)\ge0$ for all $n$, and for $p\nmid N$, $\pi_p$ and
$\pi_p'$ are tempered if and only if $H(a_p,b_p)=0$. We verify by the above that
$r(H)=0$, so by Lemma~\ref{lem:positive}(2) there exists $\delta\in(0,1]$
such that $\sum_{n=1}^\infty\Lambda(n)H(a_n,b_n)/n^{1-\delta}<\infty$. Thus,
with $S$ as in the statement of the theorem, we have
\begin{equation}\label{eq:Hestimate}
\sum_{\substack{p\in S\\p\le X}}(\log{p})a_p^8
\le\sum_{\substack{n\le X\\a_n\notin A}}\Lambda(n)a_n^8
\ll\sum_{n\le X}\Lambda(n)H(a_n,b_n)
\le X^{1-\delta}\sum_{n=1}^\infty\frac{\Lambda(n)H(a_n,b_n)}{n^{1-\delta}}
\ll X^{1-\delta}.
\end{equation}
Including the possible contribution from ramified primes, which are
finite in number, we see that $\pi_p$ and $\pi_p'$ are tempered for all
but $O(X^{1-\delta})$ primes $p\le X$, which proves (3a).

Next, for each $\alpha\in A$ we define a polynomial $f_\alpha\in\R[x,y]$
of total degree at most $6$, as follows:
\begin{center}
\begin{tabular}{rll}
$\alpha$ & $f_\alpha$ & $r(f_\alpha)/f_\alpha(\alpha,\alpha^\tau)$\\ \hline
$0$ & $(xy+1)(x^2-1)(x^2-4)$ & $\frac14$ \\
$\pm1$ & $(x^2+y^2-3)x(x+\alpha)(x^2-4)$ & $\frac16$ \\
$\pm2$ & $(xy+1)x(x^2-1)(x+\alpha)$ & $\frac1{120}$ \\
$\pm\varphi,\pm\varphi^\tau$ & $(x-y)(1\pm x\pm y)(x-\alpha^\tau)$ & $\frac1{10}$ \\
\end{tabular}
\end{center}
In each case, we have $f_\alpha(\beta,\beta^\tau)\ge0$ for all
$\beta\in\Z\cup A$, with
$f_\alpha(\beta,\beta^\tau)=0$ for $\beta\in A\setminus\{\alpha\}$
and $f_\alpha(\alpha,\alpha^\tau)>0$.
Also, since $\deg f_\alpha\le 6$, we have
$f_\alpha(a_n,b_n)\ll a_n^6\le a_n^8$ whenever $a_n\notin A$.
Thus, by \eqref{eq:Hestimate} and Lemma~\ref{lem:positive}(1),
\begin{align*}
\sum_{\substack{p\le X\\\lambda_\pi(p)=\alpha}}\log{p}
&=O(X^{1/2})+\sum_{\substack{n\le X\\a_n=\alpha}}\Lambda(n)
=O(X^{1/2}+X^{1-\delta})+\frac1{f_\alpha(\alpha,\alpha^\tau)}
\sum_{n\le X}\Lambda(n)f_\alpha(a_n,b_n)\\
&=\frac{r(f_\alpha)}{f_\alpha(\alpha,\alpha^\tau)}X+o(X)
\quad\text{as }X\to\infty.
\end{align*}
By partial summation, it follows that $\{p:\lambda_\pi(p)=\alpha\}$
has natural density $r(f_\alpha)/f_\alpha(\alpha,\alpha^\tau)$, whose values
are shown in the table. This proves (4).

\subsection{Weak automorphy of symmetric powers}
Let $G=\SL_2(\F_5)$, which is the smallest group supporting a
$2$-dimensional icosahedral representation \cite[\S2]{wang}. Then $G$
has nine irreducible representations, with dimensions $1$, $2$, $2$,
$3$, $3$, $4$, $4$, $5$ and $6$. Their characters all take values in
$\Z[\varphi]$ and can be written as
\begin{align*}
\chi_0&=1,\quad\chi_1=\chi,\quad\chi_2=\chi^\tau,\quad\chi_3=P_2(\chi),\quad\chi_4=P_2(\chi^\tau),\\
\chi_5&=\chi\chi^\tau,\quad\chi_6=P_3(\chi),\quad\chi_7=P_4(\chi),\quad\chi_8=\chi P_2(\chi^\tau),
\end{align*}
where $\chi$ is one of the characters of dimension $2$ and $\chi^\tau$
is its Galois conjugate. (Our numbering scheme is more or less arbitrary,
and was made for notational convenience below.)  The character table is
as follows:
\begin{center}
\begin{tabular}{r|rrrrrrrrr}
&$\begin{psmallmatrix}1&0\\0&1\end{psmallmatrix}$
&$\begin{psmallmatrix}4&0\\0&4\end{psmallmatrix}$
&$\begin{psmallmatrix}3&2\\4&3\end{psmallmatrix}$
&$\begin{psmallmatrix}2&2\\4&2\end{psmallmatrix}$
&$\begin{psmallmatrix}2&0\\0&3\end{psmallmatrix}$
&$\begin{psmallmatrix}4&1\\0&4\end{psmallmatrix}$
&$\begin{psmallmatrix}4&2\\0&4\end{psmallmatrix}$
&$\begin{psmallmatrix}1&1\\0&1\end{psmallmatrix}$
&$\begin{psmallmatrix}1&2\\0&1\end{psmallmatrix}$\\ \hline
$\chi_0$ & $1$&$1$&$1$&$1$&$1$&$1$&$1$&$1$&$1$\\
$\chi_1$  & $2$&$-2$&$1$&$-1$&$0$&$\varphi$&$\varphi^\tau$&$-\varphi$&$-\varphi^\tau$\\
$\chi_2$ & $2$&$-2$&$1$&$-1$&$0$&$\varphi^\tau$&$\varphi$&$-\varphi^\tau$&$-\varphi$\\
$\chi_3$  & $3$&$3$&$0$&$0$&$-1$&$\varphi$&$\varphi^\tau$&$\varphi$&$\varphi^\tau$\\
$\chi_4$ & $3$&$3$&$0$&$0$&$-1$&$\varphi^\tau$&$\varphi$&$\varphi^\tau$&$\varphi$\\
$\chi_5$ & $4$&$4$&$1$&$1$&$0$&$-1$&$-1$&$-1$&$-1$\\
$\chi_6$ & $4$&$-4$&$-1$&$1$&$0$&$1$&$1$&$-1$&$-1$\\
$\chi_7$ & $5$&$5$&$-1$&$-1$&$1$&$0$&$0$&$0$&$0$\\
$\chi_8$ & $6$&$-6$&$0$&$0$&$0$&$-1$&$-1$&$1$&$1$
\end{tabular}
\end{center}

\medskip
Let $\langle\;,\;\rangle$ denote the inner product on $L^2(G)$.
For each $k\ge0$ and $i\in\{0,\ldots,8\}$, let
$$
m_{k,i}=\langle P_k(\chi),\chi_i\rangle
$$
be the multiplicity of $\chi_i$ in the $k$th symmetric power of $\chi$, so that
$$
P_k(\chi)=\sum_{i=0}^8 m_{k,i}\chi_i.
$$
In view of the character table, for any $\alpha\in A$ we may choose $g\in
G$ with $\chi(g)=\alpha$ and evaluate both sides of the above at $g$
to get
$$
P_k(\alpha)=\sum_{i=0}^8 m_{k,i}h_i(\alpha,\alpha^\tau),
$$
where we write
\begin{align*}
h_0&=1,\quad h_1=x,\quad h_2=y,\quad h_3=P_2(x),\quad h_4=P_2(y),\\
h_5&=xy,\quad h_6=P_3(x),\quad h_7=P_4(x),\quad h_8=xP_2(y).
\end{align*}

On the automorphic side, we make the parallel definitions
\begin{align*}
\sigma_0&=1,\quad\sigma_1=\pi,\quad\sigma_2=\pi',\quad\sigma_3=\sym^2\pi,\quad\sigma_4=\sym^2\pi',\\
\sigma_5&=\pi\boxtimes\pi',\quad\sigma_6=\sym^3\pi,\quad\sigma_7=\sym^4\pi,\quad
\sigma_8=\pi\boxtimes\sym^2\pi',
\end{align*}
and we set
$$
\Pi_k=\bigboxplus_{i=0}^8(
\underbrace{\sigma_i\boxplus\cdots\boxplus\sigma_i}_{m_{k,i}\text{ times}})
\quad\text{for }k\ge0.
$$
By construction, if $p\notin S$ and $\pi_p$ is
unramified, then for every power $n=p^j$ we have
$$
c_n(\sym^k\pi)=P_k(a_n)=\sum_{i=0}^8 m_{k,i}h_i(a_n,b_n)
=\sum_{i=0}^8 m_{k,i}c_n(\sigma_i)=c_n(\Pi_k).
$$
Thus, $\sym^k\pi_p\cong\Pi_{k,p}$, as claimed.

It remains to prove the uniqueness of $\Pi_k$. Suppose that
$\Pi_k'$ is another such isobaric representation.
Then for any $i\in\{0,\ldots,8\}$,
$$
\sum_{(n,N)=1}\frac{\Lambda(n)c_n(\Pi_k'\boxtimes\sigma_i)}{n^s}
=\sum_{(n,N)=1}\frac{\Lambda(n)c_n(\Pi_k\boxtimes\sigma_i)}{n^s}
+\sum_{(n,N)=1}\frac{\Lambda(n)(c_n(\Pi_k')-c_n(\Pi_k))c_n(\sigma_i)}{n^s}.
$$
Since $c_n(\Pi_k')=c_n(\sym^k\pi)=c_n(\Pi_k)$ whenever $(n,N)=1$ and $a_n\in A$,
by Cauchy--Schwarz we have
\begin{align*}
\biggl(\sum_{(n,N)=1}&\frac{\Lambda(n)|(c_n(\Pi_k')-c_n(\Pi_k))c_n(\sigma_i)|}{n^{1-\delta/3}}\biggr)^2\\
&\qquad\le\sum_{(n,N)=1}\frac{\Lambda(n)|c_n(\Pi_k')-c_n(\Pi_k)|^2}{n^{1+\delta/3}}
\cdot\sum_{\substack{(n,N)=1\\a_n\notin A}}\frac{\Lambda(n)c_n(\sigma_i)^2}{n^{1-\delta}}.
\end{align*}
Since $\Pi_{k,p}'\cong\sym^k\pi_p$ is tempered for all unramified
$p\notin S$, the cuspidal summands of $\Pi_k'$ must be unitary,
so the first sum on the right-hand side converges by Rankin--Selberg.
As for the second, for $a_n\notin A$ we have $c_n(\sigma_i)^2\ll H(a_n,b_n)$, so it
converges as well. Therefore,
$$
\Res_{s=1}\sum_{(n,N)=1}\frac{\Lambda(n)c_n(\Pi_k'\boxtimes\sigma_i)}{n^s}
=\Res_{s=1}\sum_{(n,N)=1}\frac{\Lambda(n)c_n(\Pi_k\boxtimes\sigma_i)}{n^s}
=m_{k,i},
$$
so $\sigma_i$ occurs as a summand of $\Pi_k'$ with multiplicity $m_{k,i}$.
Since $\Pi_k$ and $\Pi_k'$ are both representations of $\GL_{k+1}(\A)$,
we have $\Pi_k'\cong\Pi_k$, as desired. This establishes (3d).

\subsection{Temperedness and Galois type at $\infty$}
Suppose that $\sym^5\pi$ is automorphic. Then it agrees with an isobaric
representation at all unramified finite places. By the uniqueness of
$\Pi_5$, we must have
$\sym^5\pi_p\cong\Pi_{5,p}=\pi_p\boxtimes\sym^2\pi_p'$ for all
$p\nmid N$. When $\lambda_\pi(p)=\lambda_{\pi'}(p)$,
this implies the relation
$$
\lambda_\pi(p)^5-4\lambda_\pi(p)^3+3\lambda_\pi(p)
=\lambda_\pi(p)(\lambda_\pi(p)^2-1),
$$
so that $\lambda_\pi(p)\in\{0,\pm1,\pm2\}$. Thus, $\pi_p$ is tempered
for all $p\nmid N$. In particular, $S$ is finite.

Finally, suppose that $S$ is finite. Then, by what we have already
shown, $\pi$ is \emph{s-icosahedral} in the sense of Ramakrishnan
\cite{R2}. Appealing to \cite[Theorem~A]{R2}, we conclude that $\pi$
is tempered and $\pi_\infty$ is of Galois type.
This establishes (3e) and concludes the proof.

\bibliographystyle{amsplain}
\bibliography{icos}
\end{document}